\newcommand{\R}{\mathbb{R}}
\newcommand{\Q}{\mathbb{Q}}
\newcommand{\Z}{\mathbb{Z}}
\newcommand{\p}{\mathfrak{p}} 
\newcommand{\Oc}{\mathcal{O}} 
\newcommand{\la}{\langle}
\newcommand{\ra}{\rangle}
\newtheorem{theorem}{Theorem}[section]
\newtheorem{lemma}[theorem]{Lemma}
\theoremstyle{definition}
\newtheorem{remark}[theorem]{Remark}
\title{Euclidean sets with only one distance modulo a prime ideal}
\author{
Hiroshi Nozaki\thanks{Department of Mathematics Education, 
	Aichi University of Education, 
	1 Hirosawa, Igaya-cho, 
	Kariya, Aichi 448-8542, 
	Japan. {\tt hnozaki@auecc.aichi-edu.ac.jp}}}
\begin{document}

\maketitle

\renewcommand{\thefootnote}{\fnsymbol{footnote}}
\footnote[0]{2010 Mathematics Subject Classification: 
05D05 (05B30)
}

\begin{abstract}
Let $X$ be a finite set in the Euclidean space $\mathbb{R}^d$. If the squared distance between any two distinct points in $X$ is an odd integer, then 
the cardinality of $X$ is bounded above by $d+2$, as shown by Rosenfeld (1997) or Smith (1995). 
They proved that there exists a $(d+2)$-point set $X$ in $\mathbb{R}^d$ having only odd integral squared distances if and only if $d+2$ is congruent to $0$ modulo $4$. 
The distances can be interpreted as an element of the finite field  $\mathbb{Z}/2\mathbb{Z}$. We generalize this result for a local ring $(A_\mathfrak{p},\mathfrak{p}A_\mathfrak{p})$ as follows. 
Let $K$ be an algebraic number field that can be embedded into $\mathbb{R}$. Fix an embedding of $K$ into $\mathbb{R}$, and $K$ is interpreted as a subfield of $\mathbb{R}$. 
Let $A=\Oc_K$ be the ring of integers of $K$, and $\mathfrak{p}$ a prime ideal of $\mathcal{O}_K$. Let $(A_\mathfrak{p},\mathfrak{p}A_\mathfrak{p})$ be the local ring obtained from the localization $(A\setminus \mathfrak{p})^{-1} A$, which is interpreted as a subring of $\mathbb{R}$. 
If the squared distances of $X\subset \mathbb{R}^d$ are in $A_\p$ and each squared distance is congruent to some constant $k \not\equiv 0 $ modulo $\p A_\p$, then $|X| \leq d+2$, as shown by Nozaki (2023). 
In this paper, 
we prove that there exists a set $X\subset \mathbb{R}^d$ attaining the upper bound $|X| \leq d+2$ if and only if $d+2$ is congruent to $0$ modulo $4$ when the finite field $A_\mathfrak{p}/ \mathfrak{p} A_\mathfrak{p}$ is of characteristic 2, and $d+2$ is congruent to $0$ modulo $p$ when $A_\mathfrak{p}/ \mathfrak{p} A_\mathfrak{p}$ is of characteristic $p$ odd. We also provide examples attaining this upper bound.

\end{abstract}

\textbf{Key words}: 
distance set, algebraic number field, mod-$\mathfrak{p}$ bound

\section{Introduction}
For a finite set $X \subset \mathbb{R}^d$, define
\[
D(X)=\{||x-y||^2 \colon\, x,y \in X, x \ne y\},
\]
where $||x-y||$ is the Euclidean distance between $x$ and $y$. 
A set $X \subset \mathbb{R}^d$ is called {\it an $s$-distance set} if $|D(X)|=s$. For an $s$-distance set $X \subset \R^d$, there is an upper bound $|X| \leq \binom{d+s}{s}$ \cite{BBS83}. 
An $s$-distance set attaining this upper bound is said to be {\it tight}. 
The major problem for $s$-distance set is to determine the largest possible $s$-distance sets for given $d$ and $s$. 
Largest $s$-distance sets in $\mathbb{R}^d$ are known only for $(s, d) = (1, \text{any}),(\leq 6, 2),(2, \leq 8),( \leq 5,3),( \leq 4,4)$ \cite{ES66,EF96,L97,NS21,S04,S08,Spre,SO20,W12}. 
There are analogous upper bounds on $s$-distance set in various spaces like on the Euclidean sphere \cite{DGS77} or association schemes \cite{BIb,D73}. 
Largest $s$-distance sets often have good combinatorial structures; in particular, tight $s$-distance sets are closely related to designs in the sphere or certain association schemes \cite{D73,DGS77}. 

There is a similar upper bound for a set in $\R^d$ with only $s$ squared distances distinct modulo some prime ideal $\p A_\p$ as we show \eqref{eq:bound} bellow.    
An extension field $K$ of rationals $\mathbb{Q}$ is an {\it algebraic number field} if the degree $[K:\mathbb{Q}]$ is finite.  
The {\it ring of integers $\mathcal{O}_K$} is the ring consisting of all algebraic integers in $K$.  
It is well known that any prime ideal of $\mathcal{O}_K$ is maximal. 
We always suppose $K$ is embedded into $\mathbb{R}$, and regard $K \subset \mathbb{R}$.  
For a commutative ring $A$, $(A,\mathfrak{m})$ is a {\it local ring} if $A$ has a unique maximal ideal $\mathfrak{m}$. 
It is well known that for a commutative ring $A$ and its maximal ideal $\mathfrak{p}\subset A$, we can construct a local ring $(A_{\mathfrak{p}}, \mathfrak{p} A_{\mathfrak{p}})$.  
For $A=\mathcal{O}_K$, the local ring is 
\[
A_\mathfrak{p}=S^{-1} A=\{a/s \in K \mid a \in A, s \in S\},
\]
where $S=A \setminus \mathfrak{p}$. 
Its unique maximal ideal is $\mathfrak{p} A_{\mathfrak{p}}$, which is the ideal of 
$A_{\mathfrak{p}}$ generated by the elements of $\mathfrak{p}$. It is noteworthy that $A_\mathfrak{p}$ is a principal ideal domain and the natural map $f: A/\mathfrak{p} \rightarrow A_\mathfrak{p}/ \mathfrak{p} A_\mathfrak{p}: x+\p \mapsto x+\p A_\p$ is a field isomorphism. 
If $D(X) \subset A_\mathfrak{p}$ for some integer ring $A=\mathcal{O}_K$ and a prime ideal $\mathfrak{p} \subset \mathcal{O}_K$, we define 
\[
D_{\mathfrak{p}}(X) = \{a \mod{\mathfrak{p}A_{\mathfrak{p}}} \mid a \in D(X)\}. 
\]
If $D(X) \subset A$ holds, one has 
\[
D_{\mathfrak{p}}(X) = \{a \mod{\mathfrak{p}} \mid a \in D(X)\}. 
\]
Nozaki \cite{NX} proved that if $0 \not\in D_{\p}(X)$ and $|D_{\p}(X)|=s$, then 
\begin{equation} \label{eq:bound}
|X| \leq \binom{d+s}{s}+\binom{d+s-1}{s-1}. 
\end{equation} 
We call $X$ an {\it $s$-distance set modulo $\mathfrak{p}A_\mathfrak{p}$} ({\it resp}.\ $\mathfrak{p}$)
if $D(X)\subset A_\p$ ({\it resp}.\ $\mathcal{O}_K$), $0 \not\in D_\p(X)$, and $|D_\p(X)|=s$. 
Notice that $D_\p(X)$ does not include $0$ if and only if $D(X)$ is a subset of $A_\p \setminus \p A_\p$. 
The term {\it modular $s$-distance set} is also used if we do not need to specify $A_\p$.  
 An $s$-distance set modulo $\p A_\p$ is said to be {\it tight} if the set attains the bound \eqref{eq:bound}. 

The number of the distances in $X$ is the same as that of $\sqrt{r}X=\{\sqrt{r} x \colon\, x \in X\}$ for any $r>0$. 
We consider under what conditions \( D(\sqrt{r}X) \) can become a subset of \( A_\p \setminus \p A_\p \) when \( D(X) \) is a subset of \( K \).
For $D(\sqrt{r} X)$ to remain a subset of $K$, $r$ must be an element of $K$.
For $0\ne a \in K$, let ${\rm ord}_\mathfrak{p}(a)$ denote the integer $n$ such that $a A_\mathfrak{p}=(\mathfrak{p} A_\mathfrak{p})^n$, where $a A_\mathfrak{p}$ and $(\mathfrak{p} A_\mathfrak{p})^n$ are fractional ideals of $A_\mathfrak{p}$ (finitely generated  $A_\mathfrak{p}$-submodule of $K$) and ${\rm ord}_\mathfrak{p}(0)=\infty$.    
Our assumption $D(\sqrt{r}X)\subset A_\p \setminus \p A_\p$ implies that each $a \in D(\sqrt{r}X)$ must have order $0$. 
Therefore, 
${\rm ord}_\p(a)$ does not depend on $a \in D(X)$ if and only if  
there exists $r \in K$ such that our assumption $0 \not\in D_\p(\sqrt{r}X)\subset A_\p \setminus \p A_\p$ is satisfied. 
Here, $r\in K$ should satisfy ${\rm ord}_\p(r)=-{\rm ord}_\p(a)$ for each $a \in D(X)$. 
Two $s$-distance sets $X$ and $Y$ modulo $\mathfrak{p} A_{\mathfrak{p}}$ are said to be \textit{similar} if there exist $r \in K$ and a congruence transformation $\sigma$ such that  $\operatorname{ord}_{\mathfrak{p}}(r) = 0$ and $Y = \sigma(\sqrt{r} X)$.
We deal with modular $s$-distance sets up to this specific similarity.  
We may suppose $1 \in D(X)$ after this rescaling.

It is noteworthy that if $X$ is an $s$-distance set modulo $\p A_\p$, then 
there exists a unit $u \in A_\p\setminus \p A_\p$ such that $\sqrt{u} X$ is an $s$-distance set modulo $\p$. 
An $s$-distance set modulo $\p$ is clearly an $s$-distance set modulo $\p A_\p$ because of $\mathcal{O}_K \subset A_\p$ and $\p \subset \p A_\p$. 
They can be understood as essentially the same concept up to similarity.

A natural problem concerning $s$-distance set modulo $\p A_\p$ is to determine when a tight set exists. In this paper, we focus on the case $s=1$, where the upper bound in \eqref{eq:bound} is $d+2$. 
The regular simplex is a $1$-distance set in $\R^d$, and it has $d+1$ points. The regular simplex with side length 1 is a $1$-distance set modulo $\p A_\p$ for any $K$ and $\p$. 
If there exists no tight 1-distance set modulo $\p A_\p$, then the regular simplex is one of the largest sets. 
Rosenfeld~\cite{R97} showed a related result that  there exists a tight $1$-distance set modulo $2 \Z$
if and only if $d+2 \equiv 0 \pmod{4}$ (see also \cite{Sp}). 
We generalize this result for any algebraic number field $K$. 
More precisely, when the characteristic of $A_\p/\p A_\p$ is 2, there exists a tight $1$-distance set modulo $\p A_\p$
if and only if $d+2 \equiv 0 \pmod{4}$. 
When the characteristic of $A_\p/\p A_\p$ is $p\ne 2$, there exists a tight $1$-distance set modulo $\p A_\p$
if and only if $d+2 \equiv 0 \pmod{p}$.  
This can be considered as one of the applications of finite fields, especially using characteristic, to discrete geometry.

This paper is organized as follows.
In Section \ref{sec:exist}, for any algebraic number field $K$ and prime ideal $\p$, 
we determine the dimensions where there exists a tight $1$-distance set modulo $\p A_\p$. 
In Section \ref{sec:example}, we discuss examples of tight $1$-distance sets modulo $\p A_\p$. 
Initially, we prove that for an $s$-distance set $X$ modulo $\p A_\p$, there exists 
a congruence transformation $\sigma$ such that the coordinates of each point in $\sigma(X)$ are contained in some finite extension field $L \supset K$. 
From this fact, $X$ can be modified to a non-similar modular $s$-distance set $X'$ in $ L^d\subset \R^d$. This modification yields infinitely many non-similar tight modular $1$-distance sets.  
 It is noteworthy that tight $s$-distance sets modulo $\p A_\p$ are known only for $s=1$. 
Tight 1-distance sets modulo $\p A_\p$ with only $2$ Euclidean distances are characterized from the viewpoint of representations of simple graphs. Tight examples obtained from Euclidean 2-distance sets that contain the regular simplex are presented.

\section{Dimensions where tight 1-distance sets modulo $\p A_\p$ exist } \label{sec:exist}
In this section, we determine the dimensions where there exist tight 1-distance sets modulo $\p A_\p$. 
We continue to use the same notation as before. 
Throughout this paper, let $J$ denote the all-ones matrix and $I$ the identity matrix.   
It is well known that for a prime ideal $\p \subset \Oc_K$, the set $\p \cap \Z $ is a prime ideal $p \Z$ of $\Z$ for some prime number $p$. For this case,  $\p \subset \Oc_K$ is called a  prime ideal lying above $p\Z$. 

The following theorem can be proved using the same method as presented in \cite{R97}. 
\begin{theorem} \label{thm:gen_d+2}
Let $p$ be a prime number in $\Z$. 
Let $A$ be the ring of integers of an algebraic number field. 
Let $\p\subset A$ be a prime ideal lying above $p\Z$. 
If there exists a $1$-distance set modulo $\p A_\p$ in $\R^d$ with $d+2$ points, then $d+2\equiv 0 \pmod{p}$.   
\end{theorem}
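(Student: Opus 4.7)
My plan is to exploit the Cayley--Menger identity, because it produces a universal polynomial relation among the pairwise squared distances of $d+2$ points in $\mathbb{R}^d$ that makes sense in any commutative ring containing those distances. First, applying Theorem~\ref{thm:D(X)} and Remark~\ref{rem:1}, I rescale $X$ so that every squared distance $D_{ij}=\|x_i-x_j\|^2$ lies in $A_\mathfrak{p}$ and reduces to a common unit $k\in A_\mathfrak{p}\setminus \mathfrak{p}A_\mathfrak{p}$ modulo $\mathfrak{p}A_\mathfrak{p}$. Label $X=\{x_1,\ldots,x_{d+2}\}$ and form the $(d+3)\times(d+3)$ Cayley--Menger matrix
\[
M=\begin{pmatrix} 0 & \mathbf{1}^T \\ \mathbf{1} & (D_{ij})_{1\le i,j\le d+2} \end{pmatrix},
\]
whose determinant vanishes in $\mathbb{R}$ by the classical Cayley--Menger identity for $d+2$ points in $\mathbb{R}^d$. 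Since every entry of $M$ lies in $A_\mathfrak{p}$, the equality $\det(M)=0$ holds in $A_\mathfrak{p}$ as well.

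Next, I reduce modulo $\mathfrak{p}A_\mathfrak{p}$. Every off-diagonal entry of the lower-right block becomes $k$, so that block becomes $k(J-I)$, where $J$ is the all-ones $(d+2)\times(d+2)$ matrix and $I$ is the identity. A short direct computation on this highly symmetric matrix -- either via the Schur-complement formula using $(J-I)^{-1}=\frac{1}{d+1}J-I$, or by the elementary row/column operation of subtracting the second row from rows $3,\ldots,d+3$ and then summing the remaining columns into a single column -- yields
\[
\det(\overline{M})\equiv (-1)^d(d+2)\,k^{d+1}\pmod{\mathfrak{p}A_\mathfrak{p}}.
\]
Combined with $\det(\overline{M})=0$ and the fact that $k$ is a unit in the residue field $A_\mathfrak{p}/\mathfrak{p}A_\mathfrak{p}$, this forces $d+2\equiv 0\pmod{\mathfrak{p}A_\mathfrak{p}}$. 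Intersecting with $\mathbb{Z}$ and using $\mathfrak{p}\cap\mathbb{Z}=p\mathbb{Z}$, we conclude $p\mid d+2$, which is the desired congruence.

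The main potential obstacle is purely computational, namely carrying out the reduction of $\det(\overline{M})$ cleanly; once the Cayley--Menger form is chosen, the argument never requires dividing by $2$, so a single proof handles all primes $p$ uniformly. This is precisely why I prefer the Cayley--Menger approach to the obvious alternative of looking at the Gram matrix of $\{x_i-x_{d+2}\}_{i=1}^{d+1}$: the Gram-matrix route produces $(d+2)k^{d+1}/2^{d+1}$ and thus requires $2$ to be invertible in the residue field, which fails in the characteristic~$2$ case that Rosenfeld originally treated.
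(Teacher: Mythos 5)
Your proof is correct, but it takes a genuinely different route from the paper's. The paper fixes a base point, sets $u_i=x_i-x_1$, and works with the $(d+1)\times(d+1)$ matrix $M=(2\langle u_i,u_j\rangle)$, whose entries lie in $A_\mathfrak{p}$ by the polarization identity $2\langle u_i,u_j\rangle=\|u_i\|^2+\|u_j\|^2-\|u_i-u_j\|^2$; after normalizing $D_\mathfrak{p}(X)=\{1\}$ this reduces to $J+I$ modulo $\mathfrak{p}A_\mathfrak{p}$, and $0=\det M\equiv\det(J+I)=d+2$ gives the conclusion. In particular, your closing claim that ``the Gram-matrix route requires $2$ to be invertible'' is true only of the undoubled Gram matrix $(\langle u_i,u_j\rangle)$; the doubled version the paper actually uses never divides by $2$ and handles all $p$, including $p=2$, just as uniformly as your Cayley--Menger matrix (and its form is what the paper reuses for the finer analysis modulo $\pi^{e+1}$ in Theorem~\ref{thm:order2}). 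Your computation itself is right: $\det(\overline{M})=(-1)^{d}(d+2)k^{d+1}$ in the residue field, and since $k\neq 0$ there, $d+2\in\mathfrak{p}A_\mathfrak{p}\cap\mathbb{Z}=p\mathbb{Z}$. One caveat: the Schur-complement justification via $(J-I)^{-1}=\frac{1}{d+1}J-I$ tacitly assumes $d+1$ is invertible modulo $\mathfrak{p}$, which can fail; you should lean on the division-free row/column operations you mention, or simply note that the determinant identity is a polynomial identity in $k$ over $\mathbb{Z}$ and hence holds in every commutative ring. What your approach buys is symmetry (no choice of base point, no doubling trick, one uniform statement); what the paper's buys is a smaller matrix and an expression for the entries that extends directly to the sharper congruences needed in the characteristic-$2$ case.
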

\begin{proof}
Let $X$ be a tight 1-distance set modulo $\p A_\p$ in $\R^d$. 
We may suppose $D_\p(X)=\{1\}$. 
Let $x_1,\ldots, x_n$ be the points in $X$, where $n=d+2=|X|$. For $u_i=x_i-x_1$ ($2\leq i \leq n$), we consider the Gram matrix $M=(2 \langle u_i, u_j \rangle)_{i,j=2,\ldots, n}$, where $\la ,\ra$ is the usual inner product of $\mathbb{R}^d$. 
The rank of $M$ is the dimension of the subspace spanned by $\{u_2, \ldots, u_n\}$, which is at most $d$. 
Thus, the rank of $M$ is less than $n-1$, and hence the determinant of $M$ is 0. 
From our assumption, $||u_i||^2=||x_i-x_1||^2 \equiv 1 \pmod{\p A_\p}$ and $||u_i-u_j||^2=||x_i-x_j||^2\equiv 1 \pmod{\p A_\p}$, and hence 
\[
2\la u_i, u_j \ra =||u_i||^2+||u_j||^2-||u_i-u_j||^2\equiv \begin{cases}
2 \pmod{\p A_\p} \text{ if $i=j$},\\
1 \pmod{\p A_\p} \text{ if $i\ne j$}.
\end{cases}
\]
Therefore, 
\[
0=\det (M) \equiv \det (J+I)=n =d+2 \pmod{\p A_\p}, 
\]
and $d+2 \equiv 0 \pmod{p}$ because of $\p A_\p\cap \Z=p\Z$. 
\end{proof}
Theorem~\ref{thm:gen_d+2} shows the necessary condition for the existence of tight 1-distance sets modulo $\p A_\p$. For an odd prime $p$, the condition is also sufficient as follows. 
\begin{theorem} \label{thm:main_odd}
Let $p$ be an odd prime number. 
Let $A$ be the ring of integers of an algebraic number field. 
Let $\p$ be a prime ideal of $A$ lying above $p \Z$. 
Then, the following are equivalent. 
\begin{enumerate}
    \item There exists a $1$-distance set $X \subset \mathbb{R}^d$ modulo $\p A_p$ with $d+2$ points.
    \item $d+2\equiv 0 \pmod{p}$. 
\end{enumerate}
\end{theorem}
\begin{proof}
$(1) \Rightarrow (2)$ follows from Theorem~\ref{thm:gen_d+2}. 
For $(2) \Rightarrow (1)$, the following set $X$ is an example of a tight 1-distance set modulo $\p A_\p$: 
\begin{equation} \label{ex:regular}
X=\{\frac{1}{\sqrt{2}} e_1, \ldots, \frac{1}{\sqrt{2}} e_d, \frac{1}{\sqrt{2}} x, \frac{1}{\sqrt{2}}y \},
\end{equation}
where $e_i$ is the vector with the $i$-th entry 1 and the other entries 0, $x=(\alpha, \ldots ,\alpha)$, $y=(\beta,\ldots, \beta)$, 
$\alpha=(1+\sqrt{d+1})/d$, and $\beta=(1-\sqrt{d+1})/d$. 
For this set $X$, the set of squared-distances is $D(X)=\{1,1+(d+2)/d\}$.  The greatest common divisor of $d+2$ and $d$ is at most 2, and $d$ is coprime to $p$. Thus $D(X) \subset A_\p$ and $D_\p(X)=\{1\}$. 
\end{proof}
\begin{remark}
The notation is defined in Theorem \ref{thm:main_odd}. 
If $d+2 \equiv 0 \pmod{p}$, then the regular simplex in $\R^d$ with side length 1 and its center also form a tight $1$-distance set modulo $\p A_\p$. Indeed, 
the other squared distance is $1-(d+2)/(2(d+1))$ and two integers $d+2$ and $2(d+1)$ are coprime. Thus, $D_\p(X)=\{1\}$ holds.  
\end{remark}
The following is the main theorem, which is a generalization of Rosenfeld's result \cite{R97}. 
\begin{theorem} \label{thm:order2}
Let $A$ be the ring of integers of an algebraic number field, and 
let $\p$ be a prime ideal of $A$ lying above $2\Z$.  
Then, the following are equivalent. 
\begin{enumerate}
    \item There exists a $1$-distance set $X \subset \mathbb{R}^d$ modulo $\p A_\p$ with $d+2$ points.
    \item $d\equiv 2 \pmod{4}$. 
\end{enumerate}
\end{theorem}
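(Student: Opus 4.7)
The easy direction (2)$\Rightarrow$(1) follows by invoking Rosenfeld's construction from \cite{R97}: for $d+2\equiv 0\pmod 4$ he produces a $(d+2)$-point set in $\R^d$ all of whose pairwise squared distances are odd integers, and any such set is automatically a tight $1$-distance set modulo $\p A_\p$ for every $K$ and every prime $\p$ above $2\Z$, since odd integers reduce to $1$ in the residue field. The content is in (1)$\Rightarrow$(2): Theorem~\ref{thm:gen_d+2} already gives $d+2\equiv 0\pmod 2$, so $d$ is even, and my goal is to exclude $d\equiv 0\pmod 4$ by generalizing Rosenfeld's mod-$4$ argument to the local ring $A_\p$, where $2$ need not be a uniformizer.

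Let $X=\{x_1,\ldots,x_{d+2}\}$, $u_i=x_i-x_1$ for $i\geq 2$, and $M=(\la u_i,u_j\ra)_{i,j=2}^{d+2}$. Since $\mathrm{rank}(M)\leq d<d+1$, I choose a nonzero $\mathbf c\in\ker(2M)\cap A_\p^{d+1}$ normalized so that some $c_i$ is a unit. Modulo $\p A_\p$ the matrix $2M$ reduces to $J-I$, whose kernel in the residue field (characteristic $2$) is spanned by $\mathbf 1$ because $d$ is even; hence every $c_i$ is a unit and the $c_i$ are pairwise congruent modulo $\p A_\p$. Setting $c_1:=-\sum_{i\geq 2}c_i$ (also a unit in the same residue class) gives $\sum_{i=1}^{d+2}c_i=0$ and $\sum_{i=1}^{d+2}c_ix_i=0$, the latter because $M\mathbf c=0$ forces $\sum c_iu_i=0$ by positive semidefiniteness of $M$ over $\R$. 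These two vanishings yield the identity $\sum_{i,j}c_ic_j\|x_i-x_j\|^2=0$. Writing $\|x_i-x_j\|^2=1+f_{ij}$ with $f_{ij}\in\p A_\p$ for $i\neq j$, and setting $e:={\rm ord}_\p(2)$, this rearranges to $\sum_i c_i^2=2\sum_{i<j}c_ic_jf_{ij}\in\p^{e+1}A_\p$. On the other hand, writing $c_i=c_1+\pi\alpha_i$ with $\pi$ a uniformizer and $\alpha_i\in A_\p$, and using $\pi\sum_i\alpha_i=-(d+2)c_1$ (from $\sum c_i=0$), gives the exact identity $\sum c_i^2=-(d+2)c_1^2+\pi^2\sum_i\alpha_i^2$. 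Combining,
\[
(d+2)\,c_1^2\;\equiv\;\pi^2 \sum\nolimits_i \alpha_i^2 \pmod{\p^{e+1}A_\p}.
\]

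The main obstacle is to upgrade this to ${\rm ord}_\p(d+2)\geq e+1$, which together with ${\rm ord}_\p(d+2)=e\cdot v_2(d+2)$ will force $v_2(d+2)\geq 2$, i.e., $4\mid d+2$. My plan is to bootstrap: assuming ${\rm ord}_\p(d+2)\geq k$, the relation $\pi\sum\alpha_i=-(d+2)c_1$ gives ${\rm ord}_\p(\sum_i\alpha_i)\geq k-1$, and then the identity $(\sum_i\alpha_i)^2-\sum_i\alpha_i^2=2\sum_{i<j}\alpha_i\alpha_j\in\p^eA_\p$ yields ${\rm ord}_\p(\sum_i\alpha_i^2)\geq\min(2(k-1),e)$; substituting back into the displayed congruence gives ${\rm ord}_\p(d+2)\geq\min(2k,e+1)$, so the bound doubles at each iteration and saturates at $e+1$ after $\lceil\log_2(e+1)\rceil$ steps. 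The ramified case $e\geq 2$ is precisely what necessitates this bootstrap: when $e=1$ the single step $k=1\mapsto 2$ already closes the argument and recovers Rosenfeld's original mod-$4$ proof verbatim.
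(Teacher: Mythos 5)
Your proposal is correct, and for the substantive direction $(1)\Rightarrow(2)$ it takes a genuinely different route from the paper's proof. Both arguments work modulo $\pi^{e+1}A_\p$ where $e={\rm ord}_\p(2)$ and $\pi$ generates $\p A_\p$, and both invoke Theorem~\ref{thm:gen_d+2} first (you to know $d$ is even, the paper to know the matrix size $d+1$ is odd); but the paper stays with determinants: it views $\det M$, $M=(2\la u_i,u_j\ra)$, as a quadratic polynomial $f_{ij}(x)=ax^2+bx+c$ in each symmetric pair of off-diagonal entries, pairs each permutation with its inverse (using the diagonal $\equiv 2$ and the odd size) to show $a,b\in 2A_\p$, concludes that each off-diagonal entry $1+\sum_{k=1}^{e}a_k\pi^k$ can be replaced by $1$ without changing $\det M$ modulo $\pi^{e+1}A_\p$, and finishes with $0=\det M\equiv\det(J+I)=d+2$ and $\pi^{e+1}A_\p\cap\Z=4\Z$. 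You instead extract a null vector of the Gram matrix scaled into $A_\p^{d+1}$ with a unit entry, identify its reduction with the kernel of $J+I$ over the residue field (one-dimensional, spanned by the all-ones vector, precisely because $d$ is even), and convert the affine relations $\sum c_i=0$, $\sum c_ix_i=0$ into $\sum_i c_i^2\in\p^{e+1}A_\p$ and then, via the exact identity $\sum_i c_i^2=-(d+2)c_1^2+\pi^2\sum_i\alpha_i^2$, into ${\rm ord}_\p(d+2)\ge e+1$, i.e.\ $v_2(d+2)\ge 2$. I verified the kernel computation, the unit normalization of $c_1$, the quadratic-form identity, and the bootstrap step ${\rm ord}_\p(d+2)\ge k\Rightarrow{\rm ord}_\p(d+2)\ge\min(2k,e+1)$; all are sound, and your $(2)\Rightarrow(1)$ is the same as the paper's (Rosenfeld's construction, with odd integers reducing to $1$ in any residue field of characteristic $2$). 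What each approach buys: yours avoids determinant expansion and the coefficient-parity analysis altogether and makes the role of ramification explicit through valuations; the paper's runs parallel to its own mod-$p$ argument and adapts directly to the Graham--Rothschild--Straus mod-$16$ refinement discussed in the remark following the theorem. Two small improvements to your write-up: the iterated bootstrap is never needed, since $d$ even already gives ${\rm ord}_\p(d+2)=e\,v_2(d+2)\ge e$, so a single application of your doubling step yields $\min(2e,e+1)=e+1$ for every $e\ge 1$; and you should state the normalization $D_\p(X)=\{1\}$ (Remark~\ref{rem:1}) before writing $\|x_i-x_j\|^2=1+f_{ij}$ --- though your computation also goes through verbatim with $1$ replaced by any unit representative $a$, giving $a\sum_i c_i^2\in\p^{e+1}A_\p$.
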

The following lemma is stated to prove Theorem \ref{thm:order2}. 
\begin{lemma} \label{lem:to_prove}
Let $A$ be the ring of integers of an algebraic number field, and let $\p$ be the prime ideal of $A$ lying above $2\Z$. Let $n$ be an odd positive integer, and suppose that a symmetric matrix $M = (m_{ij}) \in \mathrm{M}_n(A_{\p})$ satisfies
\begin{align}
m_{ij} \equiv
\begin{cases}
2 \pmod{2\p A_{\p}} & \text{if } i = j,\\
1 \pmod{\p A_{\p}} & \text{if } i \ne j.
\end{cases}
\label{eq:matrix_mij}
\end{align}
Then $\det M \equiv n+1 \pmod{2\p A_{\p}}$.
\end{lemma}

\begin{proof}
Since $\det(J + I) = n + 1$, it suffices to show that
\begin{align}
\det M \equiv \det(J + I) \pmod{2\p A_{\p}}.
\label{eq:det_proof}
\end{align}
In general, for a square matrix $N = (n_{ij}) \in \mathrm{M}_n(A_{\p})$ and a permutation $\sigma \in S_n$, define
\[
N(\sigma) = \prod_{k=1}^n n_{k,\sigma(k)}.
\]
If $N$ is symmetric, then 
\[
N(\sigma)=\prod_{k=1}^n n_{k,\sigma(k)}
=\prod_{k=1}^n n_{\sigma^{-1}(k),k}=N(\sigma^{-1}).
\]
Define $M' = (m'_{ij}) \in \mathrm{M}_n(A_{\p})$ by
\[
m'_{ij} =
\begin{cases}
\frac{1}{2} m_{ii} & \text{if } i = j,\\
m_{ij} & \text{if } i \ne j. 
\end{cases}
\]
Then
\[
m'_{ij} \equiv 1 \pmod{\p A_{\p}} \quad (1 \leq i, j \leq n).
\]
Let $f_{\sigma}$ be the number of fixed points of $\sigma \in S_n$. 
For $\sigma \in S_n$ with $\sigma = \sigma^{-1}$, $f_{\sigma}$ is nonzero since $n$ is odd. Since $M'(\sigma) \equiv 1 \pmod{\p A_{\p}}$ (equivalently $2M'(\sigma) \equiv 2 \pmod{2\p A_{\p}}$) by the above congruence, we have
\begin{equation}
M(\sigma) = 2^{f_{\sigma}} M'(\sigma)=2^{f_{\sigma}-1}\cdot 2M'(\sigma)  
\equiv 2^{f_{\sigma}} = (J + I)(\sigma)\pmod{2\p A_{\p}}. \label{eq:M_sigma}    
\end{equation}
For $\sigma \in S_n$ with $\sigma \neq \sigma^{-1}$, since $M(\sigma) \equiv (J + I)(\sigma) \pmod{\p A_{\p}}$, we have
\begin{equation}
M(\sigma) + M(\sigma^{-1}) = 2M(\sigma)
\equiv 2(J + I)(\sigma) 
= (J + I)(\sigma) + (J + I)(\sigma^{-1}) \pmod{2\p A_{\p}}. \label{eq:M_sigma_symm}
\end{equation}
Using the definition of the determinant together with \eqref{eq:M_sigma} and \eqref{eq:M_sigma_symm}, we obtain the desired result \eqref{eq:det_proof}.
\end{proof}

\begin{proof}[Proof of Theorem \ref{thm:order2}]
$(2) \Rightarrow (1)$ is already proved in \cite{R97}, namely for $d\equiv 2 \pmod{4}$, the regular simplex with side length 1 and its center is a tight 1-distance set modulo $2\Z_{(2)}\subset \p A_\p$. 
The example in \eqref{ex:regular} is also 
a tight 1-distance set modulo $2 \Z_{(2)}\subset \p A_\p$ for $d\equiv 2 \pmod{4}$. 

We prove $(1) \Rightarrow (2)$. 
Let $X$ be a tight 1-distance set modulo $\p A_\p$ in $\R^d$. 
We may suppose $D_\p(X)=\{1\}$. Let $M=(2\la u_i, u_j \ra)_{i,j=2,\ldots n}=(m_{ij}) \in \mathrm{M}_{n-1}(A_\p)$ be defined as in the proof of Theorem~\ref{thm:gen_d+2}, where $n=d+2$. 
From Theorem \ref{thm:gen_d+2}, $n-1$ is odd. 
For $i\ne j$, it is already proved that $m_{ij}\equiv 1 \pmod{\p A_\p}$.
For $i= j$, since $||u_i||^2 \equiv 1 \pmod{\p A_\p}$ holds, we have  $m_{ii}= 2||u_i||^2 \equiv 2 \pmod{2\p A_\p}$. Therefore, by Lemma \ref{lem:to_prove}, 
 one has  
\[
0=\det(M) \equiv  n =d+2 \pmod{2\p A_\p},
\]
and $d\equiv 2 \pmod{4}$ since $2 \p A_\p \cap \mathbb{Z}=4\mathbb{Z}$. 
\end{proof}


In the proofs of Theorems \ref{thm:gen_d+2} and \ref{thm:order2}, the Gram matrix $M=(2\langle u_i,u_j \rangle)$ carries a factor of $2$ in each entry, representing twice of the inner product $\langle u_i,u_j\rangle$. 
The key idea of the proofs is to choose the modulus as the largest ideal that does not contain 2.  
This necessitates differentiating the proofs based on whether the characteristic of the finite field $A_\p/\p A_\p$ is an odd prime or not.

 Graham, Rothschild, and Straus \cite{GRS74} proved that 
there exists a tight 1-distance set modulo $2 \mathbb{Z}$ in $\mathbb{R}^d$ such that $D(X)$ is a set of squared odd integers if and only if $d+2\equiv 0 \pmod{16}$. 
Notice that if $D(X)$ is a set of squared odd integers, then
$a\equiv 1,9 \pmod{16}$ for each $a \in D(X)$.
This result is generalized as the following theorem. 
\begin{theorem}
    Let $A$ be the ring of integers of an algebraic number field, and let $\p$ be a prime ideal of $A$ lying above $2\Z$. 
Then, the following are equivalent. 
\begin{enumerate}
    \item There exists a $1$-distance set $X \subset \mathbb{R}^d$ modulo $\p A_\p$ with $d+2$ points such that $a\equiv 1, 9 \pmod{8 \p A_\p}$ for each $a \in D(X)$.
    \item $d+2\equiv 0 \pmod{16}$. 
\end{enumerate}
\end{theorem}
\begin{proof}
     Graham, Rothschild, and Straus \cite{GRS74} provided an example satisfying the condition of (1) modulo $2\Z_{(2)} \subset \p A_\p $ for each $d+2 \equiv 0 \pmod{16}$.  

   We prove $(1) \Rightarrow (2)$. The notation we use is the same as in the proof of Lemma \ref{lem:to_prove}. 
   Let $M=(2\la u_i, u_j \ra)_{i,j=2,\ldots n} \in \mathrm{M}_{n-1}(A_\p)$ be defined as in the proof of Theorem~\ref{thm:gen_d+2}, where $n=d+2$ and $n-1$ is odd. Note that $8 \p A_\p \cap \Z=16 \Z$. The matrix entries satisfy
   \[
2\la u_i, u_j \ra =||u_i||^2+||u_j||^2-||u_i-u_j||^2 \equiv
\begin{cases}
2 \pmod{8 \p A_{\p}} & \text{if } i = j,\\
1,9 \pmod{8 \p A_{\p}} & \text{if } i \ne j,
\end{cases}
\]
since $||u_i||^2\equiv ||u_j||^2\equiv 1,9 \pmod{8 \p A_\p}$ and 
\[
||u_i-u_j||^2\equiv \begin{cases}
    0 \pmod{8 \p A_\p}& \text{if } i = j,\\
    1,9 \pmod{8 \p A_\p}& \text{if } i \ne j.
\end{cases}
\]
Let $f_{\sigma}$ be the number of fixed points of $\sigma \in S_n$. 
From $2 = 2 \cdot 1 \equiv 2 \cdot 9 \pmod{8 \mathfrak{p} A_\mathfrak{p}}$, it follows that any integer of the form $2^s 1^t 9^k$ is congruent to $2^s$ modulo $8 \mathfrak{p} A_\mathfrak{p}$ when $s \geq 1$.  
Therefore, one has $2M(\sigma)\equiv 2\cdot 2^{f_{\sigma}} \pmod{8 \p A_\p}$. 
Moreover, if $f_\sigma$ is nonzero, then $M(\sigma)=2^{f_{\sigma}}$.

For $\sigma \in S_{n-1}$ with $\sigma = \sigma^{-1}$, 
\begin{align*}
    M(\sigma)\equiv 2^{f_{\sigma}} \equiv (J+I)(\sigma) \pmod{8 \p A_\p}
\end{align*}
since $f_{\sigma}$ is nonzero. 
For $\sigma \in S_{n-1}$ with $\sigma \ne \sigma^{-1}$, 
\begin{align*}
    M(\sigma)+M(\sigma^{-1})=2 M(\sigma) \equiv 2 \cdot 2^{f_{\sigma}} = 2(J+I)(\sigma)= (J+I)(\sigma)+(J+I)(\sigma^{-1})\pmod{8 \p A_\p}.
\end{align*}
Therefore, one has
\[
0=\det(M) \equiv \det(J+I) = n =d+2 \pmod{8\p A_\p},
\]
and $d\equiv 2 \pmod{16}$. 
\end{proof}

\section{Tight 1-distance set modulo $\p A_\p$} \label{sec:example}
In this section, we provide examples of tight 1-distance sets modulo $\p A_\p$. 
We use the same notation $K$, $\Oc_K=A$, $\p$, and $A_\p$ as in the previous sections. 
Initially, we will verify whether the number of modular distances decreases upon field extension $L \supset K$. 
Let $P\subset B=\mathcal{O}_L$ be a prime ideal lying above a prime ideal $\mathfrak{p} \subset A=\mathcal{O}_K$, namely  $\mathfrak{p}=P \cap \mathcal{O}_K$ holds.  
If $D(X) \subset A_\mathfrak{p}$ holds, then  $D(X) \subset B_P$ and $|D_\mathfrak{p}(X)|=|D_P(X)|$ hold. 
Indeed, the natural homomorphism $\alpha + \mathfrak{p}A_\mathfrak{p} \mapsto 
 \alpha + PB_P$ is injective. 
 From this discussion, if every element of $D(X)=\{a_1,\ldots, a_s\}$ is an algebraic number, then it is sufficient to select the field $K=\Q(a_1,\ldots, a_s)$ for the reduction of the number of modular distances. 
 
An $s$-distance set modulo $\p A_\p$ can be modified to a non-similar $s$-distance set modulo $PB_P$ for some ring $B\supset A$ and a prime ideal $P\subset B$ with $\p=P\cap A$. 
From the following lemma, for an $s$-distance set $X$ modulo $\p A_p$, 
we may suppose that the coordinates of each point in $X$ are contained in some finite extension field $L \supset K$. 
\begin{lemma} \label{lem:L}
Let $X$ be a finite set in $\R^d$ whose squared distances $D(X)$ are contained in an algebraic number field $K$.
Then, there exist a congruence transformation $\sigma$ and a finite extension field $L$ of $K$ such that $L \subset \R$ and $L$ contains all coordinates of each point of $\sigma(X)$. 
\end{lemma}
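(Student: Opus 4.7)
The plan is to produce an isometric copy of $X$ whose coordinates lie in a finite extension of $K$; since the conclusions of the preceding sections (distances modulo $\p A_\p$) depend only on squared distances, we are free to apply a rigid motion of $\R^d$ to $X$ and so this suffices. The approach is essentially linear algebra: extract the Gram matrix of the translated points, diagonalize it abstractly over $K$, and then realize the resulting orthogonal system in coordinates.

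First I would translate so that one fixed point $x_1 \in X$ coincides with the origin, and set $v_i = x_i - x_1$ for $i = 2, \dots, n$, where $n = |X|$. The polarization identity
\[
2 \langle v_i, v_j \rangle = \|v_i\|^2 + \|v_j\|^2 - \|v_i - v_j\|^2
\]
shows that every pairwise inner product $\langle v_i, v_j \rangle$ lies in $K$. Hence the Gram matrix $G = (\langle v_i, v_j \rangle)_{2 \le i, j \le n}$ has all entries in $K$ and is positive semi-definite of some rank $r \le d$.

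Next I would apply the Gram--Schmidt procedure to the $v_i$, working only with their $K$-valued inner products: set $w_1 = v_2$ and, inductively,
\[
w_k = v_{k+1} - \sum_{j < k} \frac{\langle v_{k+1}, w_j \rangle}{\langle w_j, w_j \rangle}\, w_j,
\]
skipping any index at which this expression vanishes (which happens precisely when $v_{k+1}$ lies in the span of its predecessors). This yields pairwise orthogonal vectors $w_1, \dots, w_r$, each of which is a $K$-linear combination of the $v_i$ with $\|w_k\|^2 \in K$, and conversely each $v_i$ is a $K$-linear combination of the $w_k$.

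Finally, take $L = K\bigl(\sqrt{\|w_1\|^2}, \dots, \sqrt{\|w_r\|^2}\bigr)$, a finite extension of $K$. Identify the orthonormal system $w_1/\|w_1\|, \dots, w_r/\|w_r\|$ with the first $r$ standard basis vectors of $\R^d$ by a rigid motion (extending arbitrarily to an orthonormal basis), and apply this rigid motion to $X$. Under this identification the coordinates of each $v_i$ are $L$-linear combinations of $\|w_1\|, \dots, \|w_r\|$ with coefficients drawn from the Gram--Schmidt recursion (hence in $K$), so they lie in $L$. The only technicality is handling rank deficiency in the Gram--Schmidt loop, which is dealt with by skipping null vectors and taking $r = \operatorname{rank} G$; I do not expect any genuine obstacle, as this is a routine translation of the classical abstract Gram--Schmidt / Cholesky factorization to the setting of a base field $K$ that may fail to contain the requisite square roots.
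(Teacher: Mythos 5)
Your proof is correct and takes essentially the same route as the paper: both extract a Gram matrix with entries in $K$ from the squared distances (the paper via the double centering $N=(-1/2)PMP$ with $P=I-(1/n)J$, you via translation to a base point and polarization) and then factor it over a finite extension of $K$, your explicit Gram--Schmidt with adjoined square roots $\sqrt{\|w_k\|^2}$ being exactly the Cholesky factorization that the paper invokes as a black box. Your opening observation that the statement must be read up to rigid motion is consistent with the paper's own proof, which likewise only produces coordinates (a matrix $F$ with $N=FF^\top$) for a congruent copy of $X$.
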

\begin{proof}
Let $M$ be the squared-distance matrix $M=(||x-y||^2)_{x,y\in X}$ of $X$. 
From our assumption, each element of $M$ belongs to $K$. Let $P$ be the matrix $P=I-(1/|X|) J$. 
By direct calculation, $N=(-1/2)PMP$ is the Gram matrix of $\sigma(X)$ with some congruence transformation $\sigma$ \cite{N81}. 
Indeed, the $(x,y)$-entry of $N$ can be written as 
$\langle x-w,y-w\rangle$, where $w=(1/|X|)\sum_{z \in X}z$.  
Note that each entry in the Gram matrix $N$ belongs to $K$.  
By the Cholesky factorization algorithm \cite[Theorem 10.9]{Hb}, there exists an $n\times d$ matrix $F$ such that $N =FF^\top$. 
The computation of each entry of $F$, as defined in the algorithm, establishes that each entry belongs to a certain finite extension field $L$ of $K$. The row vectors in $F$ can be regarded as the coordinates of $\sigma(X)$. 
This implies the lemma. 
\end{proof}
From the following theorem, 
infinitely many non-similar modular $s$-distance sets can be obtained from a given $s$-distance set modulo $\p A_\p$ while maintaining the cardinality.  
\begin{theorem} \label{thm:make_ex}
Let $X=\{x_1,\ldots, x_n\} \subset \mathbb{R}^d$ be an $s$-distance set modulo $\p A_\p$, where $A=\Oc_K$ and $\p$ is a prime ideal of $A$. 
From Lemma \ref{lem:L}, we may suppose that  there exists a finite extension field $L\supset K$ such that   $L$ contains all coordinates $x_{ij}$ of each point $x_i=(x_{i1},\ldots, x_{id})\in X$. 
Let $P\subset B=\Oc_L$ be a prime ideal lying above $\p\subset \Oc_K$. 
Let $t=\max\{0,-\min_{i,j}{\rm ord}_{P}(2x_{ij})\}$. Then, 
for any $a_i=(a_{i1},\ldots, a_{id})$ with $a_{ij} \in (PB_P)^{t+1}$, the set $X'=\{x_1+a_1,\ldots,x_n+a_n\}$ is 
an $s$-distance set modulo $P B_P$. 
\end{theorem}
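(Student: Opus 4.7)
The plan is to reduce everything to a valuation estimate showing that perturbing each $x_i$ by a vector $a_i$ whose entries lie in $(PB_P)^{t+1}$ changes each squared distance only by an element of $PB_P$.

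First, I would expand
\[
\|(x_i+a_i)-(x_j+a_j)\|^2 = \|x_i-x_j\|^2 + 2\langle x_i-x_j,\, a_i-a_j\rangle + \|a_i-a_j\|^2
\]
and show the last two terms lie in $PB_P$. For the cross term, write it as $\sum_{k=1}^d 2(x_{ik}-x_{jk})(a_{ik}-a_{jk})$. By the very definition of $t$, each factor $2x_{ik}$ satisfies $\mathrm{ord}_P(2x_{ik}) \ge -t$, hence $2(x_{ik}-x_{jk}) \in (PB_P)^{-t}$. The perturbation factor $a_{ik}-a_{jk}$ lies in $(PB_P)^{t+1}$, so each summand lies in $(PB_P)^{-t}\cdot(PB_P)^{t+1} = PB_P$. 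For the quadratic term, $\|a_i-a_j\|^2 = \sum_k(a_{ik}-a_{jk})^2 \in (PB_P)^{2t+2} \subset PB_P$. Therefore
\[
\|(x_i+a_i)-(x_j+a_j)\|^2 \equiv \|x_i-x_j\|^2 \pmod{PB_P}.
\]

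Next, I would transfer the modular-distance information from $A_\p$ to $B_P$. Since $\p = P\cap A$, there is a natural injective ring homomorphism $A_\p/\p A_\p \hookrightarrow B_P/PB_P$ (the map used in the proof of the preceding injectivity theorem in Section~\ref{sec:2}). Under this inclusion, $D_\p(X)$ maps to the set of residues of $D(X)$ modulo $PB_P$; by injectivity, this image has cardinality $s$ and does not contain $0$. Combined with the congruence above, $\{\,\|(x_i+a_i)-(x_j+a_j)\|^2 \bmod PB_P : i\ne j\,\}$ coincides with the image of $D_\p(X)$ in $B_P/PB_P$, a set of size $s$ not containing $0$.

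Finally, I would note that the nonvanishing of these residues forces $\|(x_i+a_i)-(x_j+a_j)\|^2 \ne 0$ for $i\ne j$, so $X'$ consists of $n$ distinct points. Together with the previous paragraph, this shows $X'$ is an $s$-distance set modulo $PB_P$.

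The proof is essentially a valuation bookkeeping argument, so I do not anticipate a genuine obstacle; the only subtlety to double-check is the case $t=0$, where one must verify that $-\min_{i,j}\mathrm{ord}_P(2x_{ij}) \le 0$ indeed guarantees $2(x_{ik}-x_{jk}) \in B_P$, which is immediate from the definition of $t$ as $\max\{0,-\min_{i,j}\mathrm{ord}_P(2x_{ij})\}$.
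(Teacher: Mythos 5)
Your proof is correct and takes essentially the same route as the paper: the identical expansion of $\|(x_i+a_i)-(x_j+a_j)\|^2$ with the same valuation estimates ${\rm ord}_P\bigl(2(x_{ik}-x_{jk})\bigr)\ge -t$ and ${\rm ord}_P(a_{ik}-a_{jk})\ge t+1$ giving the congruence modulo $PB_P$. Your explicit invocation of the injection $A_\p/\p A_\p \hookrightarrow B_P/PB_P$ and the distinctness of the perturbed points merely spells out what the paper compresses into ``Thus the assertion follows.''
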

\begin{proof}
For any $x_i+a_i,x_j+a_j \in X'$, the squared distance between them is 
\begin{align*}
||(x_i+a_i)-(x_j+a_j)||^2&=||(x_i-x_j)+(a_i-a_j)||^2\\
&=||x_i-x_j||^2+2\langle x_i-x_j,a_i-a_j \rangle+||a_i-a_j||^2\\
&\equiv ||x_i-x_j||^2 \pmod{PB_P}
\end{align*}
from our assumption because of ${\rm ord}_{P}(2 (x_{ik}-x_{jk}))\geq \min\{ {\rm ord}_{P}(2x_{ik}),{\rm ord}_P (2x_{jk}))\} \geq -t$ and ${\rm ord}_{P}(a_{ik}-a_{jk})\geq t+1$. Thus, the assertion follows. 
\end{proof}
\begin{remark}
From the tight 1-distance set modulo $p \mathbb{Z}_{(p)}$ defined in \eqref{ex:regular},  
 we can obtain infinitely many non-similar tight modular 1-distance sets.
\end{remark}

In the remaining part of this section, we provide tight 1-distance sets modulo $\p A_\p$ having only two Euclidean distances. 
For $X\subset \R^d$ with $D(X)=\{ a_1,\ldots, a_s \}$, the following value $t_i$ is called the {\it LRS ratio}: 
\[
t_i=\prod_{j\ne i} \frac{a_j}{a_j-a_i}  
\]
for $i\in \{1,\ldots, s\}$ \cite{LRS77,N11}. Note that $\sum_{i=1}^s t_i=1$ \cite{MN11}. 
If $|X|> \binom{d+s-1}{s-1}+\binom{d+s-2}{s-2}$ holds, 
then the values $t_i$ are algebraic integers \cite[Corollary 4.2]{NX}.   
If $t_i$ is an algebraic integer for each $i$, then there does not exist $K$ such that  
$|D_\p(X)|<s$ for some $\p \subset \Oc_K$ \cite[Equation (4.1)]{NX}. The converse is not true in general, but it is true for $s=2$.

\begin{theorem} \label{thm:2-dis}
Let $X\subset \R^d$ be a 2-distance set with $D(X)=\{1,a \}$. Suppose the squared distance $a$ is an algebraic number. Let $K=\Q(a)$ and $\Oc_K$ the ring of integers of $K$. 
The LRS ratio $t=1/(1-a)$ is not an algebraic integer if and only if there exists a prime ideal $\p \subset \Oc_K$ such that $|D_\p(X)|=1$. 
\end{theorem}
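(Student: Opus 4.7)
My plan is to prove the two implications separately. The ``only if'' direction is a direct application of the cited result of \cite{NX} specialized to $s=2$, while the ``if'' direction is a short localization argument.

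For the ``only if'' direction, the cited result from \cite{NX} states that if every LRS ratio $k_i$ is an algebraic integer then no prime $\p$ yields $|D_\p(X)|<s$. In our setting the two LRS ratios are $k_1 = a/(a-1)$ and $k_2 = k = 1/(1-a)$, and they satisfy $k_1 + k_2 = 1$. Since $1$ is an algebraic integer and the algebraic integers in $K$ form a ring, $k_1$ is an algebraic integer if and only if $k_2 = k$ is. Hence the cited result specializes exactly to the contrapositive of the desired implication: if $k$ is an algebraic integer then no prime $\p$ of $\Oc_K$ satisfies $|D_\p(X)|=1$.

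For the ``if'' direction, suppose $k = 1/(1-a)$ is not an algebraic integer. Since $\Oc_K$ is a Dedekind domain one has $\Oc_K = \bigcap_\p A_\p$, the intersection running over all prime ideals $\p \subset \Oc_K$. Therefore some $\p$ satisfies $k \notin A_\p$, equivalently ${\rm ord}_\p(1-a) \geq 1$, i.e.\ $1-a \in \p A_\p$. Writing $a = 1 - (1-a)$ gives $a \in A_\p$ and $a \equiv 1 \pmod{\p A_\p}$, so $a$ is a unit of $A_\p$. Consequently $D(X) = \{1, a\} \subset A_\p$, $0 \notin D_\p(X)$, and both elements reduce to the residue class of $1$, so $D_\p(X) = \{1 \bmod \p A_\p\}$ and $|D_\p(X)| = 1$.

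The substantive content is therefore packaged in the cited inequality of \cite{NX}; beyond that, the proof amounts to the standard local--global description of algebraic integers in a Dedekind domain, combined with the identity $k_1+k_2=1$ used to pass between ``all $k_i$ are algebraic integers'' and ``$k$ is an algebraic integer''. The only point requiring care is verifying that the cited theorem of \cite{NX} is stated in a form covering the case $s=2$ verbatim; since the paper explicitly mentions this specialization in the paragraph preceding the theorem, I do not anticipate a genuine obstacle.
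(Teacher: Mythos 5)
Your proposal is correct and takes essentially the same route as the paper: the paper likewise delegates the implication ``existence of $\p$ with $|D_\p(X)|=1$ implies $k\notin\Oc_K$'' to the result of \cite{NX} quoted just before the theorem, and proves the other implication by producing a prime $\p$ with ${\rm ord}_\p(1-a)\geq 1$, using the prime factorization of the fractional ideal $(1-a)\Oc_K$ where you use the equivalent fact $\Oc_K=\bigcap_\p A_\p$. Two cosmetic remarks: your ``if''/``only if'' labels are swapped relative to the usual reading of the statement (the implications themselves are handled correctly), and your explicit verification via $k_1+k_2=1$ that the two LRS ratios are simultaneously algebraic integers is a point the paper leaves implicit.
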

\begin{proof}
We only prove that $t=1/(1-a) \not \in \Oc_K$ implies that there exists $\p \subset \Oc_K$ such that $|D_\p(X)|=1$. 
Let $(1-a)\Oc_K$ be the fractional ideal generated by $1-a$. We can obtain the prime factorization  $(1-a)\Oc_K=\p_1^{c_1}\cdots \p_r^{c_r}$. 
Since $t=1/(1-a) \not \in \Oc_K$, there exists $i$ such that $c_i \geq 1$. Put $\p=\p_i$ for such $i$, and then
$(1-a)A_\p=\p^{c_i} A_\p\subset \p A_\p$. 
This implies 
$a\equiv 1 \pmod{\p A_\p}$, and $D_\p(X)=\{1\}$.
\end{proof}

A 2-distance set is identified with a representation of a simple graph. If a 2-distance set $X$ in $\R^d$ has size at least $d+2$, then $X$ is a minimal-dimensional representation of some simple graph \cite{ES66,R10}. 

\begin{theorem}\label{thm:a}
Let $X \subset \mathbb{R}^d$ be a 2-distance set with $D(X)=\{1,a \}$. If $|X| \geq d+2$, then $a$ is an algebraic number.  
\end{theorem}
\begin{proof}
Let $A$ be the adjacency matrix of the simple graph corresponding to the 2-distance set $X$, and  
the squared-distance matrix of $X$ is equal to $M=aA+ A'$, where $A'=J-I-A$. 
Let $P=I-(1/|X|)J$, which is the orthogonal projection matrix onto the space $j^{\perp}$, where $j^{\perp}$ is the space perpendicular to the all-ones vector $j$. 
Then, $(-1/2)PMP$ is the Gram matrix of $\sigma(X)$ with some congruence transformation $\sigma$ and the rank of $-PMP$ is at most $d$ \cite{N81}. 
Given that $|X| \geq d+2$, the smallest eigenvalue of $-PMP = -(aPAP + PA'P)$ on $j^\perp$ is 0. 
Since $PAP$ and $PA'P=-P-PAP$ have the same eigenspaces (these matrices commute), the distance $a$ is expressed as $a = (\lambda+1)/\lambda$, where $\lambda$ is the smallest eigenvalue of $PAP$ on $j^\perp$.  
All entries of $PAP$ are rational, and hence $\lambda$ and $a$ are algebraic numbers. 
\end{proof}
From Theorems \ref{thm:2-dis} and \ref{thm:a}, 
a tight modular 1-distance set with two Euclidean distances is characterized by the property that its LRS ratio is not an algebraic integer. 
If $X$ is on a sphere and $|D_\p(X)|=1$ for some prime ideal $\p$, then $|X| \leq d+1$, which is proved by the same manner as in \cite{NX}. 
A non-spherical 2-distance set with $d+2$ points is characterized as Type (5), see the definition of Type (5) in \cite[Theorem 2.4]{NS12}.  
There are many simple graphs of Type (5) whose LRS ratio is not an algebraic integer. 
It is easy to obtain such graphs from random graphs.

A tight 1-distance set modulo $2\mathbb{Z}$ given in \cite{GRS74,R97} is a regular simplex and its center, which is a 2-distance set. 
Nozaki and Shinohara \cite{NS20} investigated a 2-distance set that contains a regular simplex. 
Let $\mathcal{R}$ be the $d$-dimensional regular simplex. 
The set $\mathcal{R}$ can be expressed by $\mathcal{R}=\{e_1,\ldots, e_{d+1}\} \subset H_d=\{x \in \R^{d+1} \mid \la j, x \ra =1\} \cong \mathbb{R}^d$, where $j$ is the all-ones vector and $e_i$ is the vector with the $i$-th entry 1 and the other entries 0.  
It is proved in \cite{NS20} that $\mathcal{R}\cup \{x\}$ is a 2-distance set in $H_d$ if and only if $x \in T_d(k,\beta)=\{(x_1,\ldots,x_{d+1}) \in H_d \mid \forall i, x_i \in \{c, c+\beta\},|N(x,c)|=k \}$, where $N(x,c)=\{i \mid x_i=c\}$, $k \in \{1,\ldots, d+1\}$,  
\[
c=\frac{1}{d+1}-\frac{d+1-k}{d+1} \beta, \text{ and } 
\beta=\begin{cases}
\frac{k\pm \sqrt{k(d+1)(d+2-k)}}{k(d+1-k)} \text{ if $2\leq k \leq d$},\\
1+\frac{2}{d} \text{ if $k=1$}, \\ 
-\frac{d+2}{2(d+1)} \text{ if $k=d+1$}. 
\end{cases}
\]
We determine when a 2-distance set containing the regular simplex $\mathcal{R}$ is a tight $1$-distance set modulo $\p A_\p$. 
\begin{theorem} \label{thm:4.7}
Let $\mathcal{R}=\{e_1,\ldots, e_{d+1}\}$ be the $d$-dimensional regular simplex in $H_d$, and $x \in T_d(k,\beta)\subset H_d$.  
Let $X=(1/\sqrt{2})(\mathcal{R} \cup \{x\})$, which is a 2-distance set. 
Let $K$ be an algebraic number field that contains the distances $D(X)$. 
If $K\ne \Q$,  
then the following are equivalent.  
\begin{enumerate}
    \item There exists a prime ideal $\p \subset A=\Oc_K$ such that $X$ is a tight $1$-distance set modulo $\p A_\p$. 
    \item $k\ne (d+2)/2$ or $d \not \equiv 0 \pmod{4}$. 
\end{enumerate}
\end{theorem}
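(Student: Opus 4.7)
The plan is to apply Theorem~\ref{thm:2-dis} after an explicit computation of the squared distance set of $X = \mathcal{R}\cup\{x\}$. By Remark~\ref{rem:1} we may normalize so that $1 \in D(X)$, and then condition~(1) becomes equivalent to the LRS ratio $1/(1-a)$ not being an algebraic integer, where $a$ is the squared distance distinct from $1$.

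First I would compute $D(X)$ directly. The $\binom{d+1}{2}$ simplex edges contribute squared distance $2$; for each $e_i \in \mathcal{R}$, a short calculation shows that $\|x - e_i\|^2$ depends only on whether $x_i = c$ or $x_i = c+\beta$, the two values differing by $2\beta$. The definition of $T_d(k,\beta)$ chooses $\beta$ so that one of these equals $2$, whence $D(X) = \{2,\,2+2\beta\}$ up to the sign of $\beta$. Rescaling by $1/\sqrt{2}$ yields normalized distances $\{1,\,1+\beta\}$ in $\Q(\beta)$, and by the invariance of $|D_\p(X)|$ under field extensions proved in Section~\ref{sec:2} we may replace the given $K$ by $\Q(\beta)$; Theorem~\ref{thm:2-dis} then reduces condition~(1) to the statement that $-1/\beta$ is not an algebraic integer.

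The hypothesis $K \ne \Q$ rules out $k = 1$ and $k = d+1$, where $\beta$ is rational. For $2 \le k \le d$ we have $\beta = \tfrac{k \pm \sqrt{N}}{k(d+1-k)}$ with $N = k(d+1)(d+2-k)$, and rationalizing yields $-1/\beta = \tfrac{k \mp \sqrt{N}}{d+2}$, whose minimal polynomial over $\Q$ is
\[
X^2 - \frac{2k}{d+2}\,X - \frac{k(d+1-k)}{d+2}.
\]
Because $K \ne \Q$ forces $N$ to be a non-square, $-1/\beta$ is a genuine quadratic irrational, hence an algebraic integer if and only if both its trace $\tfrac{2k}{d+2}$ and its norm $\tfrac{k(d+1-k)}{d+2}$ lie in $\Z$.

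Finally, for $1 \le k \le d+1$ the condition $\tfrac{2k}{d+2} \in \Z$ forces $k = (d+2)/2$ and hence $d$ even, and in that case $\tfrac{k(d+1-k)}{d+2} = d/4 \in \Z$ if and only if $4 \mid d$. Hence $-1/\beta$ is an algebraic integer exactly when $k = (d+2)/2$ and $d \equiv 0 \pmod{4}$, and negating gives the equivalence claimed in the theorem. The main bookkeeping issue I anticipate is tracking the sign of $\beta$ against the choice of which of the two values of $\|x-e_i\|^2$ equals $2$, but since the trace and norm computations are symmetric in the sign of $\sqrt{N}$, the algebraic-integer condition is insensitive to this choice and no genuine obstacle arises.
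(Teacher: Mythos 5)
Your proposal is correct and takes essentially the same route as the paper: both reduce via Theorem~\ref{thm:2-dis} to deciding when the LRS ratio $-1/\beta=\bigl(k\mp\sqrt{k(d+1)(d+2-k)}\bigr)/(d+2)$ is an algebraic integer, use $K\ne\Q$ to discard $k=1,d+1$, force $k=(d+2)/2$ from the rational part, and then obtain the condition $4\mid d$. The only difference is cosmetic: you test integrality via the trace $2k/(d+2)$ and norm $k(d+1-k)/(d+2)=d/4$ of the minimal polynomial, whereas the paper invokes the explicit ring of integers of $\Q(\sqrt{r})$, so your variant even sidesteps the paper's slightly delicate aside about $d+1$ not being square-free.
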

\begin{proof}
By Theorems~\ref{thm:2-dis} and \ref{thm:a}, it suffices to determine when the LRS ratio is an algebraic integer.  Note that $D(X)=\{1,\beta+1\}$. Then, the LRS ratio is $t=-1/\beta$. 
From our assumption $K\ne \Q$, we have $k\ne 1,d+1$. 
For $2\leq k \leq d$, we can express
\[
t=-\frac{1}{\beta}=\frac{k}{d+2}\pm \frac{\sqrt{k(d+1)(d+2-k)}}{d+2}. 
\]
It is well known that for $L=\mathbb{Q}(\sqrt{r})$ with a square-free integer $r$, one has 
\[
\mathcal{O}_L=\begin{cases}
\mathbb{Z}+ \frac{1+\sqrt{r}}{2} \mathbb{Z} \text{\quad  if $r \equiv 1 \pmod{4}$},\\ 
\mathbb{Z}+ \sqrt{r} \mathbb{Z} \text{\quad if $ r \equiv 2,3 \pmod{4}$}.
\end{cases}
\]
If $\sqrt{k(d+1)(d+2-k)}$ is not an integer and $t \in \Oc_K$, 
then $k/(d+2) \in (1/2) \mathbb{Z}$. 
From $2\leq k \leq d$, one has $k=(d+2)/2$ and $d$ is even. Moreover, we have
\[
\frac{\sqrt{k(d+1)(d+2-k)}}{d+2}=\frac{\sqrt{d+1}}{2}
\]
with $k=(d+2)/2$. Since $d+1$ is an odd integer, $d+1$ should be congruent to 1 modulo 4 to satisfy $t\in \Oc_K$ even if $d+1$ is not square-free. 
Therefore, $t\in \Oc_K$ if and only if $k=(d+2)/2$ and $d\equiv 0 \pmod{4}$. 
From Theorem \ref{thm:2-dis}, this theorem follows. 
\end{proof}
\begin{remark}
For $k=1$, it follows that $K= \Q$ and the LRS ratio $-1/\beta=-d/(d+2)$ is not an integer. Thus, for $k=1$, there exists a prime ideal $\p \subset \Oc_K$ such that $|D_\p(X)|=1$ by Theorem \ref{thm:2-dis}. This is the example \eqref{ex:regular}.  
For $k=d+1$, it follows that $K= \Q$ and the LRS ratio $-1/\beta=2(d+1)/(d+2)$ is not an integer. Thus, for $k=d+1$, there exists a prime ideal $\p \subset \Oc_K$ such that $|D_\p(X)|=1$ by Theorem \ref{thm:2-dis}. 
This corresponds to the regular simplex and its center.  
\end{remark}

\begin{remark} \label{rem:modif}
A tight 1-distance set $X$ modulo $\p A_\p$ presented in Theorem \ref{thm:4.7} can be obtained in the manner described in Theorem \ref{thm:make_ex} from the regular simplex and its center. 
Indeed, for the coordinates 
\[
X=\frac{1}{\sqrt{2}}\{ e_1,\ldots, e_{d+1}, (c,\ldots,c,c+\beta,\ldots, c+\beta)\},
\]
one has $D(X)=\{1,\beta+1\}$ and hence $\beta\equiv 0 \pmod{\p A_\p}$. From Theorems \ref{thm:main_odd} and \ref{thm:order2},
$\p$ contains $d+2$ but does not contain $d+1$. Therefore,
\[
c+\beta \equiv c=\frac{1}{d+1}-\frac{d+1-k}{d+1} \beta \equiv \frac{1}{d+1} \pmod{\p A_\p}, 
\]
and $(1/(d+1),\ldots, 1/(d+1))$ corresponds the center of the regular simplex $\{e_1,\ldots, e_{d+1}\}$. 
\end{remark}

Every $(d+2)$-point non-spherical 2-distance set with a non-algebraic integer LRS ratio is a tight modular 1-distance set.  
Many tight modular 1-distance sets can easily be derived from the 2-distance sets through Theorem \ref{thm:make_ex}.  
Our goal is to classify tight modular 1-distance sets while exploring possible frameworks for this classification.

\bigskip

\noindent
\textbf{Acknowledgments.} 
The author expresses gratitude to Akihiro Munemasa for suggesting the use of the Cholesky factorization algorithm to adjust the coordinates of Euclidean sets and for providing valuable comments on the initial draft of the paper. 
The author also thanks the two anonymous referees for their insightful comments on ambiguous aspects of the discussion. 
Furthermore, the author is particularly grateful to one of the referees for their suggestion, which greatly simplified the proof of Theorem \ref{thm:order2}. 
 The author was partially supported by JSPS KAKENHI Grant Numbers 18K03396, 19K03445, 20K03527, 22K03402, and 24K06688.


\begin{thebibliography}{9}
\bibitem{BBS83}
E. Bannai, E. Bannai, and D. Stanton, 
An upper bound for the cardinality of an $s$-distance subset in real Euclidean space II,  {\it Combinatorica} {\bf 3} (1983), 147--152.

\bibitem{BIb}
E. Bannai and T. Ito, 
{\it Algebraic Combinatorics I: Association Schemes}, 
Benjamin/Cummings, Menlo Park, CA, 1984.

\bibitem{D73}
P. Delsarte, 
An algebraic approach to the association schemes of
coding theory, 
{\it Philips Res. Rep. Suppl.} No. 10, (1973).

\bibitem{DGS77}
P. Delsarte, J.M. Goethals, and J.J. Seidel, 
Spherical codes and designs,
{\it Geom. Dedicata} {\bf 6} (1977), 363--388.

\bibitem{ES66}
S.J. Einhorn and I.J. Schoenberg,
On Euclidean sets having only two distances between points. I. II,
{\it Nederl.\ Akad.\ Wetensch.\ Proc.\ Ser.\ A} {\bf 69}={\it Indag.\ Math.} {\bf 28} (1966), 
479--488, 489--504.


\bibitem{EF96}
 P. Erd\H{o}s and P. Fishburn, Maximum planar sets that determine $k$ distances, 
{\it Discrete Math.} {\bf 160} (1996), 115--125.

\bibitem{GRS74}
R.L. Graham, B.L. Rothschild, and E.G. Straus, 
Are there $n+2$ points in $E^n$ with pairwise odd integral distances?,
{\it Amer. Math. Monthly} {\bf 81} (1974), 21--25.

\bibitem{Hb}
N.J. Higham, 
{\it Accuracy and stability of numerical algorithms}, second edition, Soc. Industrial Applied Math., 2002.


\bibitem{LRS77}
D.G. Larman, C.A. Rogers, and J.J. Seidel, 
On two-distance sets in Euclidean space, 
{\it Bull. Lond. Math. Soc.} {\bf 9} (1977), 261--267.

\bibitem{L97}
P. Lison\v{e}k, 
New maximal $2$-distance sets, 
\textit{J. Combin.\ Theory, Ser.\ A} {\bf 77} (1997), 318--338.


\bibitem{MN11}
O.R. Musin and H. Nozaki, 
Bounds on three- and higher-distance sets,
{\it European J. Combin.} {\bf 32} (2011), 1182--1190.

\bibitem{N81}
A. Neumaier, 
Distance matrices, dimension, and conference graphs, 
{\it Nederl.\ Akad.\ Wetensch.\ Indag.\ Math.} {\bf 43} (1981), 385--391.

\bibitem{N11}
H. Nozaki, 
A generalization of Larman--Rogers--Seidel's theorem, 
{\it Discrete Math.} {\bf 311} (2011), no.\ 10--11, 792--799. 

\bibitem{NX}
H. Nozaki, 
Bounds for sets with few distances distinct modulo a prime ideal, {\it Algebr. Comb.} {\bf 6} (2023), no.\ 2, 539--545. 

\bibitem{NS12}
H. Nozaki and M. Shinohara, 
A geometrical characterization of strongly regular graphs, 
{\it Linear Algebra Appl.} {\bf 437} (2012), no.\ 10, 2587--2600.

\bibitem{NS20}
H. Nozaki and M. Shinohara
Maximal 2-distance sets containing the regular simplex, {\it Discrete Math.} {\bf 343} (2020), no.\ 11,
112071. 

\bibitem{NS21}
H. Nozaki and M. Shinohara, 
A proof of a dodecahedron conjecture for distance sets, {\it Graphs Combin.} {\bf 37} (2021), 1585--1603. 

\bibitem{R97}
M. Rosenfeld, 
In praise of the Gram matrix,
{\it The Mathematics of Paul Erd\"{o}s II}, Springer Berlin, Heidelberg, 1997. 

\bibitem{R10}
A. Roy, 
Minimal Euclidean representations of graphs, 
{\it Discrete Math.} {\bf 310} (2010), 727--733.

\bibitem{S04}
 M. Shinohara, 
Classification of three-distance sets in two dimensional Euclidean space, 
{\it European J.\ Combin.} {\bf 25} (2004),
1039--1058.



\bibitem{S08}
 M. Shinohara, 
Uniqueness of maximum planar five-distance sets, 
{\it Discrete Math.} {\bf 308} (2008), 3048--3055.

\bibitem{Spre}
M. Shinohara, 
Uniqueness of maximum three-distance sets in the three-dimensional Euclidean space, 
arXiv:1309.2047. 

\bibitem{Sp}
W. Smith, 
Odd squared distances, 
preprint, 1995. \\
https://rangevoting.org/WarrenSmithPages/homepage/works.html

\bibitem{SO20}
F. Sz\"{o}ll\H{o}si and P.R.J. \"{O}sterg{\aa}rd,
Constructions of maximum few-distance sets in Euclidean spaces, 
{\it Electron.\ J. Combin.} {\bf 27} (1)
(2020), \#P1.23.

\bibitem{W12}
X. Wei,
A proof of Erd\H{o}s--Fishburn's 
conjecture for $g(6)=13$, 
{\it Electron.\ J. Combin.} {\bf 19} (4)
(2012), \#P38.

\end{thebibliography}
\end{document}